\newtheorem{thm}{Theorem}[section]
\newtheorem{lem}[thm]{Lemma}
\newtheorem{qu}[thm]{Question}
\newtheorem*{unnumberedclm}{Claim}
\newtheorem{clm}{Claim}
\newtheorem{remark}[thm]{Remark}
\newtheorem{defi}[thm]{Definition}
\theoremstyle{definition}
\theoremstyle{plain}
\newcommand{\D}{\mathsf{d}}
\newcommand{\NN}{\mathbb{N}}
\newcommand{\ZZ}{\mathbb{Z}}
\newcommand{\eps}{\varepsilon}
\newcommand{\dcut}{\textrm{d}_{\square}}
\newcommand{\dlp}{\textrm{dist}_{LP}}
\begin{document}

\title{On pattern-avoiding permutons
\thanks{
JH: Research supported by Czech Science Foundation Project 21-21762X. FG, KP: This work has received funding from the MUNI Award in Science and Humanities (MUNI/I/1677/2018) of the Grant Agency of Masaryk University. GK: The work on the project leading to this application has received funding from the European Research Council (ERC) under the European Union's Horizon 2020 research and innovation programme (grant agreement No. 741420), from the \'UNKP-20-5 New National Excellence Program of the Ministry of Innovation and Technology from the source of the National Research, Development and Innovation Fund, from Lend\"ulet grant no. 2022-58 and from the J\'anos Bolyai Scholarship of the Hungarian Academy of Sciences.
}
\\
}
\author{Frederik Garbe\thanks{Universit\"at Heidelberg,  Institut f\"ur Informatik, Im Neuenheimer Feld 205, 69120 Heidelberg, Germany. E-mail: {\tt garbe@informatik.uni-heidelberg.de}. Previous affiliation: Faculty of Informatics, Masaryk University, Botanick\'a 68A, 602 00 Brno, Czech Republic.} \and Jan Hladk\'y\thanks{Institute of Computer Science of the Czech Academy of Sciences, Prague, Czech Republic, E-mail: {\tt hladky@cs.cas.cz}} 
\and Gábor Kun\thanks{HUN-REN Alfr\'ed R\'enyi Institute of Mathematics,  Budapest, Hungary and Institute of Mathematics, E\"otv\"os Lor\'and University, Budapest, Hungary, E-mail: {\tt kungabor@renyi.hu}} \and 
Kristýna Pekárková\thanks{Faculty of Informatics, Masaryk University, Brno, Czech Republic, E-mail: {\tt kristyna.pekarkova@mail.muni.cz}}}

\date{}

\maketitle

\begin{abstract}

The theory of limits of permutations leads to limit objects called permutons, which are certain Borel measures on the unit square. 
We prove that permutons avoiding a given permutation of order $k$ have a particularly simple structure. Namely, almost every fiber of the disintegration of the permuton (say, along the x-axis) consists only of atoms, at most $(k-1)$ many, and this bound is sharp. We use this to give a simple proof of the `permutation removal lemma'.
\end{abstract}

\newpage
\section{Introduction}\label{sec:Intro}
The main result of this paper concerns limit properties of pattern-avoiding permutations.
Here, by a \emph{permutation} we mean a bijection $\pi:[n]\to [n]$ for some $n\in\NN$. We write $\mathbb{S}(n)$ for the set of all permutations on $[n]$. The fact that $[n]$ is equipped with the natural order allows us to define the notion of a pattern. Namely, for $k\in [n]$ and for a $k$-element set $K\in\binom{[n]}{k}$ we say that \emph{$K$ induces a pattern $A\in\mathbb{S}(k)$ in $\pi$} if for all $i,j\in [k]$ we have that $A(i)<A(j)$ if and only if the image of the $i$th smallest element of $K$ under $\pi$ is smaller than the image of the $j$th smallest element of $K$ under $\pi$. The \emph{density of $A$ in $\pi$}, denoted by $t(A,\pi)$, is the proportion of $k$-tuples $K$ that induce $A$. We say that $\pi$ is \emph{$A$-avoiding} if $t(A,\pi)=0$. Pattern avoidance is one of the most vivid parts of the combinatorics of permutations, and indeed its treatise spans most of the standard books on permutations~\cite{MR2919720,MR3012380}.
A classical result that can be understood in this way is that of Erd\H os and Szekeres~\cite{esz} on monotone patterns. Most of the questions in the area of pattern avoidance concern a setting in which the pattern is fixed and $n$ is large. A very broad question is how restrictive the property of avoiding a specific pattern is. The most famous result in this direction is due to Marcus and Tardos~\cite{MaTa04}, formerly known as the Stanley--Wilf conjecture, and says, that for any fixed pattern $A$ there exists a constant $c_{A}$ so that the number of $A$-avoiding permutations of any order
$n$ is at most $c_{A}^{n}$. The monographs of B\'ona~\cite{MR2919720} and Kitaev~\cite{MR3012380} contain many further results regarding the number and structure of pattern-avoiding permutations.

Following the success of the theory of limits of dense graphs, Hoppen,
Kohayakawa, Moreira, Ráth, and Sampaio~\cite{HoKo13} introduced a limit theory for finite permutations; the name and the following measure theoretic view on the limit object were introduced by Kr\'al' and Pikhurko~\cite{KrPi13}. Here, we recall only basics needed to state our main result and defer details to Section~\ref{ssec:permutons}. A \emph{permuton} $\Gamma$ is a Borel probability measure on $[0,1]^{2}$ with uniform marginals, that is, $\Gamma(Z\times[0,1])=\Gamma([0,1]\times Z)$ is equal to the Lebesgue measure of $Z$ for any Borel set $Z\subset[0,1]$.
The main feature of the theory of permutons is that they generalise permutations and form a compact metric space with the so-called \emph{rectangular distance}. Furthermore, pattern densities are continuous with respect to this metric. 

We now formally define pattern densities for permutons. Recall that for $A\in \mathbb{S}(k)$ and $\pi\in\mathbb{S}(n)$ with $k\le n$ we have defined the density of $A$ in $\pi$ as the proportion of $k$-tuples of $[n]$ which induce the pattern $A$. To define pattern avoidance for permutons we will work with geometric representations of a pattern $A\in \mathbb{S}(k)$. Let $S_A\subset ([0,1]^2)^k$ be the collection of all sets of $k$ points $(x_1,y_1),(x_2,y_2),\ldots,(x_k,y_k)$ such that for all $1\le i<j\le k$ we have $x_i<x_j$ and further that $y_i<y_j$ if and only if $A(i)<A(j)$. We call each element of $S_A$ a \emph{geometric representation} of $A$. Vice versa each collection $(x_1,y_1),(x_2,y_2),\ldots,(x_k,y_k)$ of $k$ points such that for every $i< j$ we have $x_i< x_j$ and $y_i\neq y_j$ \emph{induces} a unique permutation $A\in \mathbb{S}(k)$ via $A(i)\coloneqq
|\{j\in[k]\mid y_j\leq y_i\}|$. Note that each element of $S_A$ induces $A$. We can now extend the notion of pattern density to permutons. The \emph{density of $A$ in a permuton $\Gamma$} is defined as $t(A,\Gamma)\coloneqq k!\cdot \Gamma^{\otimes k}(S_A)$, where $\Gamma^{\otimes k}$ denotes the $k$-dimensional product measure of $\Gamma$. A probabilistic interpretation of $t(A,\Gamma)$ is as follows. Suppose
that we sample points $\left(x_{1},y_{1}\right),\left(x_{2},y_{2}\right),\ldots,\left(x_{k},y_{k}\right)$
independently at random from the measure $\Gamma$. Then $t(A,\Gamma)$
is the probability that when reading these points from left to right,
their vertical positions are consistent with $A$. In particular, we say that $\Gamma$ is \emph{$A$-avoiding} if $t(A,\Gamma)=0$.%

Let $\mathfrak{M}$ be the space of all finite Borel measures on $[0,1]$.
For a given $\gamma\in\mathfrak{M}$ and $\ell\in\mathbb{N}$ we say
that \emph{$\gamma$} is an \emph{$\le\ell$-molecule} if there exists
a set $M\subset[0,1]$ with $|M|\le\ell$ such that $\gamma\left([0,1]\setminus M\right)=0$. We say that $\gamma$ is an \emph{$\ell$-molecule} if it is an $\le\ell$-molecule but not an $\le(\ell-1)$-molecule.

Our theorem below says that the support of the measure of any pattern-avoiding permuton has a particularly simple one-dimensional structure. This is phrased in terms of the disintegration of a measure, a classical concept which we briefly recall tailored to our setting and refer the reader to Section~10.6 of~\cite{MR2267655} for details. Suppose
that $\left\{ \Lambda_{x}\in\mathfrak{M}\right\} _{x\in[0,1]}$ is a collection of measures.
We now add an integrability condition. A reader not very familiar with measure theory can understand the main point of the paper ignoring this condition just like one can understand quite a bit about real integration without knowing that not every function can be integrated.
Suppose that $\left\{ \Lambda_{x}\in\mathfrak{M}\right\} _{x\in[0,1]}$ satisfies that the map $x\mapsto\Lambda_{x}$ is
measurable with respect to the Borel $\sigma$-algebra induced by the weak convergence topology on $\mathfrak{M}$.
Then we can define a new Borel measure $\Lambda$ on $[0,1]^{2}$
by
\[
\Lambda(B):=\int_{x}\Lambda_{x}\left(\left\{ y\in[0,1]\;|\;(x,y)\in B\right\} \right)\;\D\lambda(x)\quad\text{for each Borel \ensuremath{B\subset[0,1]^{2}}}\;.
\]
Then $\left\{ \Lambda_{x}\right\}_{x\in[0,1]}$ is called a \emph{disintegration
}of $\Lambda$ with respect to the Lebesgue measure $\lambda$. The
Disintegration Theorem tells us that a disintegration always exists
and is unique up to a nullset. We call the measures $\Lambda_{x}$
\emph{fibers}. 

We give two examples which are particularly relevant for us. Firstly, the (unique, up to a nullset) disintegration of the 2-dimensional Lebesgue measure $\lambda^{\otimes 2}$ on $[0,1]^2$ is the collection of 1-dimensional Lebesgue measures, $\left\{ \Lambda_{x}=\lambda\right\}_{x\in[0,1]}$. Secondly, consider the \emph{diagonal permuton} $D$, that is, $D(Z)=\lambda(\{x\in[0,1]\;|\;(x,x)\in Z\})$ for every Borel $Z\subset [0,1]^2$. Then at each $x\in[0,1]$, the fiber of $D$  at $x$ is the Dirac measure $\mathsf{Dirac}_x$. In particular, in the second example, all fibers are $1$-molecules, whereas in the first example they are not $\leq \ell$-molecules for any $\ell$.

We can now state our main result.
\begin{thm}\label{thm:molecule}
Suppose that $A$ is a pattern of order $k$ and $\Gamma$ is an $A$-avoiding permuton. Let us fix a disintegration $\left\{ \Gamma_{x}\right\} _{x\in[0,1]}$ of $\Gamma$. Then almost all fibers of $\Gamma$ are $\le\left(k-1\right)$-molecules.

This bound is optimal: for every pattern $A$ of order $k$ there exists an $A$-free permuton such that almost all of its fibers are $(k-1)$-molecules.
\end{thm}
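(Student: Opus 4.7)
The plan is to treat the two halves of the theorem separately.

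For the upper bound I would argue by contradiction: assume the set $E:=\{x\in[0,1]:\Gamma_{x}\text{ is not an }\le(k-1)\text{-molecule}\}$ has positive Lebesgue measure. For each $x\in E$ the topological support of $\Gamma_{x}$ contains at least $k$ distinct points, so some partition of $[0,1]$ into $k$ intervals $B_{1}^{x},\ldots,B_{k}^{x}$ satisfies $\Gamma_{x}(B_{j}^{x})>0$ for every $j$. Shrinking endpoints to rationals gives a countable family of candidate partitions, and pigeonholing over it produces a fixed partition $\mathcal{P}=(B_{1},\ldots,B_{k})$ together with a positive-measure $E^{*}\subseteq E$ such that $\Gamma_{x}(B_{j})>0$ for every $x\in E^{*}$ and every $j\in[k]$. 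Specializing the geometric formula for $t(A,\Gamma)$ to configurations with $x_{i}\in E^{*}$ and $y_{i}\in B_{A(i)}$ then yields
\[
t(A,\Gamma)\;\ge\;k!\int_{\substack{x_{1}<\cdots<x_{k}\\ x_{i}\in E^{*}}}\prod_{i=1}^{k}\Gamma_{x_{i}}\!\bigl(B_{A(i)}\bigr)\,\D\lambda(x_{1})\cdots\D\lambda(x_{k})\;>\;0,
\]
contradicting $A$-avoidance.

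For the optimality statement I would exhibit an explicit $A$-avoiding permuton. Split $[0,1]$ into $k-1$ equal bands $B_{i}=[\tfrac{i-1}{k-1},\tfrac{i}{k-1}]$, and assign each a direction $d_{i}\in\{\uparrow,\downarrow\}$ by the rule $d_{i}=\,\uparrow$ iff $A^{-1}(i)>A^{-1}(i+1)$. Let $f_{i}:[0,1]\to B_{i}$ be the affine bijection of slope $\pm(k-1)^{-1}$ whose monotonicity matches $d_{i}$, and define
\[
\Gamma\;:=\;\tfrac{1}{k-1}\sum_{i=1}^{k-1}(\mathrm{id}\times f_{i})_{*}\lambda,
\]
placing mass $\tfrac{1}{k-1}$ uniformly on each graph $\{(x,f_{i}(x)):x\in[0,1]\}$. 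Because the bands tile $[0,1]$ both marginals of $\Gamma$ equal $\lambda$, so $\Gamma$ is a permuton, and each fiber $\Gamma_{x}=\tfrac{1}{k-1}\sum_{i}\delta_{f_{i}(x)}$ is supported on the $k-1$ distinct points $f_{1}(x),\ldots,f_{k-1}(x)$ (one per band), hence is a $(k-1)$-molecule.

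The main obstacle is verifying that $\Gamma$ is $A$-avoiding. Any $k$-tuple from $\mathrm{supp}(\Gamma)$ whose pattern equals $A$ induces a map $\phi:[k]\to[k-1]$ sending the $y$-rank $v$ of a sampled point to the band it lies in; since the bands are vertically disjoint, $\phi$ is non-decreasing. Writing $\sigma_{v}:=\mathrm{sgn}(A^{-1}(v+1)-A^{-1}(v))$, a direct unpacking shows that such a sample realises $A$ only if $\sigma_{v}\ne\sigma_{\phi(v)}$ for every $v$ with $\phi(v)=\phi(v+1)$ — this is exactly the mismatch between the $A$-monotonicity on a repeated curve and the prescribed direction $d_{\phi(v)}$. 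I would then prove the combinatorial lemma that every non-decreasing $\phi:[k]\to[k-1]$ admits some $v^{*}\in[k-1]$ with $\phi(v^{*})=\phi(v^{*}+1)=v^{*}$. For the proof set $g(v):=\phi(v)-v$; then $g(1)\ge 0$, $g(k)\le -1$, and $g$ drops by at most $1$ per step (by exactly $1$ precisely at a repeat of $\phi$), so $v^{*}:=\max\{v:g(v)\ge 0\}$ satisfies $g(v^{*})=0$ and $g(v^{*}+1)=-1$, i.e.\ $\phi(v^{*})=\phi(v^{*}+1)=v^{*}$. At such a $v^{*}$ the required $\sigma_{v^{*}}\ne\sigma_{v^{*}}$ is absurd, so no $A$-pattern is realised.
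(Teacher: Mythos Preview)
Your proof is correct in both parts.

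For the upper bound your argument is essentially the paper's: a countable pigeonhole over rational interval-partitions produces a fixed $k$-partition $(B_1,\ldots,B_k)$ and a positive-measure set $E^*$ on which every fiber charges each $B_j$, and then the product of fiber masses gives $t(A,\Gamma)>0$. The only cosmetic difference is that the paper additionally invokes the Lebesgue density theorem to chop a neighbourhood of a density point of $E^*$ into $k$ consecutive subintervals $P_1,\ldots,P_k$ (so that $x_i\in P_i$ automatically enforces $x_1<\cdots<x_k$), whereas you integrate directly over the ordered simplex in $(E^*)^k$. Your route is a bit shorter.

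For optimality your permuton is exactly the transpose of the paper's piecewise-linear construction (the paper builds an $A^{-1}$-free permuton with horizontal $(k-1)$-molecule fibers and then appeals to the $x\leftrightarrow y$ symmetry; you build the transposed object directly). The difference lies in the verification of $A$-avoidance. The paper argues by induction on $\ell$ that any geometric copy of $(A^{-1})_{\restriction[\ell]}$ in the support forces $x_\ell>\tfrac{\ell-1}{k-1}$, yielding a contradiction at $\ell=k$. You instead record the band map $\phi:[k]\to[k-1]$ of a hypothetical copy, observe that a repeat $\phi(v)=\phi(v+1)=j$ forces $\sigma_v\neq\sigma_j$, and then use the discrete intermediate-value argument on $g(v)=\phi(v)-v$ to locate a fixed-point repeat $\phi(v^*)=\phi(v^*+1)=v^*$, giving the absurd $\sigma_{v^*}\neq\sigma_{v^*}$. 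The two arguments are different packagings of the same obstruction; yours isolates the combinatorial core as a clean stand-alone lemma, while the paper's induction stays closer to the geometry.
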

Theorem~\ref{thm:molecule} is a counterpart to a result of Cooper on finite permutations,~\cite[Theorem~3]{MR2212495}. The language of permutons allows for a simpler formulation and proof. In particular, to prove Theorem~\ref{thm:molecule} we use the Lebesgue density theorem while the proof in~\cite{MR2212495} is based on a regularity lemma for permutations (developed in that paper). This difference is yet another illustration how analytic statements combined with an appropriate theory of limits of discrete structures can substitute combinatorial tools such as regularity lemmas. 
One of the most fascinating demonstrations of this phenomenon is Elek's and Szegedy's approach to hypergraph limits \cite{MR2964622}.

Let us remark that Dole\v{z}al,  M\'{a}th\'{e} and the second author~\cite{DoHlMa17} proved a result in a similar spirit (but using completely different tools) for graphons: every graphon avoiding a specific graph must be countably-partite.

Our construction for the second half of the theorem will be piecewise linear. One may expect that pattern-free permutons are similarly nice. However, this need not be the case as we show in Section~\ref{ssec:nondifferentiable}. 

\begin{figure}[t]
    \centering
    \includegraphics[scale=0.7]{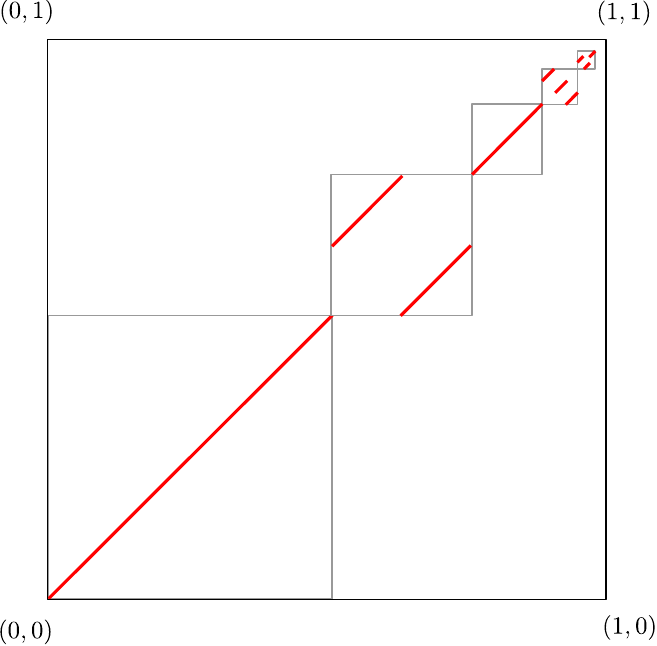}
    \caption{Permuton $\Gamma_{\mathrm all}$ for the sequence $A_1=(12)$, $A_2=(21)$, $A_3=(123)$, $A_4=(321)$, $A_5=(213)$, \ldots.}
	\label{fig:DirectSum}
\end{figure}
A reverse of Theorem~\ref{thm:molecule} does not hold. That is, we can construct a permuton $\Gamma_{\mathrm all}$ such that every disintegration only consists of $1$-molecule fibers, but $\Gamma_{\mathrm all}$ contains every pattern with positive density. For a given pattern $A$ of size $k$ we can construct a blow-up permuton $\Gamma_A$ whose support consists of $k$ increasing segments of equal length ordered according to $A$. Choosing an enumeration $\{A_1,A_2,\cdots\}$ of all countably many patterns we can scale each permuton $\Gamma_{A_k}$ by the factor $\tfrac{1}{2^{k}}$ and form the direct sum $\Gamma_{\mathrm all}=\bigoplus_{k\in\mathbb N} \tfrac{1}{2^{k}}\Gamma_{A_k}$ which just means that we place the scaled $\Gamma_{A_k}$ along the diagonal. See Figure~\ref{fig:DirectSum} for an illustration. Formally, $\Gamma_{\mathrm all}$ is the unique permuton with the support
\[\bigcup_{k\in\mathbb N, i\in[|A_k|]}\left\{\left(\tfrac{i-1}{2^k|A_k|}+x+\sum_{j=1}^{k-1}\tfrac{1}{2^j},\tfrac{A_k(i)-1}{2^k|A_k|}+x+\sum_{j=1}^{k-1}\tfrac{1}{2^j}\right)\mid 0\leq x\leq \tfrac{1}{2^k|A_k|}\right\}\;.\]
Since every vertical or horizontal line intersects exactly one increasing segment in exactly one point every fiber of a disintegration is a $1$-molecule. Furthermore, the occurrence of the scaled copy of $\Gamma_{A_k}$ in $\Gamma$ ensures that the each pattern $A_k$ appears with positive density. 

\subsection{A removal lemma as an application}\label{ssec:remlemma}
As an application, we provide a proof of the following removal lemma. This is a nice example how a result about limit objects can be used to show a result for finite permutations.

\begin{thm}\label{thm:Relocation}
	Let $k\in\NN$ and $A\in\mathbb{S}(k)$. For every $\eps>0$ there exists $\delta>0$ such that the following holds. Suppose that $\pi\in\mathbb{S}(n)$ is a permutation with $t(A,\pi)<\delta$. Then there exists a permutation $\tilde\pi\in\mathbb{S}(n)$ which is $A$-avoiding and for which we have 
\begin{equation}\label{eq:removalchange}
	\sum_{i = 1}^n|\pi(i)-\tilde\pi(i)|<\eps n^2\;.
\end{equation}
\end{thm}
This `permutation removal lemma' was first obtained by Klimo\v sov\'a and Kr\'al' \cite{MR3376446} and reproved by Fox and Wei~\cite{MR3627836,MR3816060}. The former proof gave an Ackermann-type dependency between $\eps$ and $\delta$.\footnote{Note that~\cite{MR3376446} claims a doubly exponential dependency. This claim is wrong as was pointed out in Footnote~2 of~\cite{MR3627836}.} The main upshot of the latter proof was in giving a polynomial dependency between $\eps$ and $\delta$ (with the degree of the polynomial being $2^{2^{O(k)}}$). Our proof is weaker in that it does not give a quantitative dependence between $\eps$ and $\delta$. Its advantage, however, is that it is (with Theorem~\ref{thm:molecule}) almost computation-free, compared to the fairly involved proofs of Klimo\v sov\'a--Kr\'al' and Fox--Wei.
Let us also note that the metric used to quantify~\eqref{eq:removalchange} is called \emph{Spearman’s footrule distance}, and is within  a  factor  of~2 within another, perhaps more common distance, the so-called \emph{Kendall's tau distance}~\cite{DiGr77}.

\section{Notation and preliminaries}\label{sec:preliminaries}

In this section we introduce some basic results from the theory of permutation limits and measure theory. This gives the context of Theorem~\ref{thm:molecule}, although its elementary proof will not make use of these additional results. However, we will use them in the proof of Theorem~\ref{thm:Relocation}. In the following $\lambda$ denotes the Lebesgue measure on $[0,1]$. For an arbitrary measure $\gamma$ on a measure space $X$, we write $\gamma^{\otimes k}$ for its $k$-th power, the product measure on $X^k$.\\

\subsection{Limits of permutations}\label{ssec:permutons}
We recall basic concepts of the theory of permutation limits as introduced in~\cite{HoKo13}. Earlier we already defined a \emph{permuton} as a probability measure on the Borel $\sigma$-algebra on $[0,1]^2$ with the uniform marginal property. The relevant metric for the limit theory of permutations is the \emph{rectangular distance} which is defined for two permutons $\Gamma_1$ and $\Gamma_2$ as
\begin{equation}\label{eq:rectangular}
\dcut(\Gamma_2, \Gamma_2) = \sup_{\substack{S, T \subseteq [0, 1]\\ \textrm{intervals}}} |\Gamma_1(S \times T) - \Gamma_2(S \times T)|\;.
\end{equation}
\begin{remark}\label{rem:nonpermutons}
~\eqref{eq:rectangular} gives a metric for general measures (i.e., not necessarily permutons), as well.
\end{remark}

Note that each finite permutation $\pi\in\mathbb{S}(n)$ has a \emph{permuton representation} $\Psi_\pi$, which is defined as follows. Take the union of rectangles $S\coloneqq\bigcup_{i=1}^n [\frac{i-1}{n},\frac{i}{n})\times [\frac{\pi(i)-1}{n},\frac{\pi(i)}{n})$. Now, for any Borel set $B\subset [0,1]^2$ we define $\Psi_\pi(B)\coloneqq n\cdot \lambda^{\otimes 2} (B\cap S)$.
Using permuton representations we can extend the rectangular distance to all finite permutations. That is, given a permuton $\Gamma$ and finite permutations $\pi_1\in\mathbb{S}(n_1)$ and $\pi_2\in\mathbb{S}(n_2)$ we write $\dcut(\pi_1,\Gamma)\coloneqq\dcut(\Psi_{\pi_1},\Gamma)$ and $\dcut(\pi_1,\pi_2)\coloneqq\dcut(\Psi_{\pi_1},\Psi_{\pi_2})$. The rectangular distance measures differences in global distributions of mass rather than differences in individual  images, which corresponds to Spearman’s footrule distance (see discussion below~\ref{eq:removalchange}). A good illustration of this is the case when $n$ is large and $\pi_1$ and $\pi_2$ are taken uniformly at random in $\mathbb{S}(n)$. Then, with high probability, $\dcut(\pi_1,\pi_2)\approx 0$, even though the images of individual elements of $\pi_1$ and of $\pi_2$ differ substantially. In fact, the 2-dimensional Lebesgue measure is the correct limit object, that is, with high probability, $\dcut(\pi_1,\lambda^{\otimes 2})\approx\dcut(\pi_2,\lambda^{\otimes 2})\approx  0$.

The following compactness result is the key result of the limit theory for permutations. We will use it to pass from finite permutations in Theorem~\ref{thm:Relocation} to a limit permuton to which we can apply Theorem~\ref{thm:molecule}.
\begin{thm}[\cite{HoKo13}]\label{thm:compact}
	The space of permutons is compact with respect to the distance $\dcut$.
\end{thm}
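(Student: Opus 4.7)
The plan is to identify permutons with a subset of the space $\mathcal{P}([0,1]^2)$ of Borel probability measures on the compact set $[0,1]^2$, and exploit the classical weak-$*$ compactness of the latter (via Banach--Alaoglu/Riesz, or equivalently Prokhorov's theorem since $[0,1]^2$ is compact Polish). The argument then has three steps: (i) permutons form a weak-$*$ closed subset of $\mathcal{P}([0,1]^2)$, hence are weak-$*$ compact; (ii) on this closed subset, the topology induced by the rectangular distance $\dcut$ coincides with the weak-$*$ topology; (iii) conclude that every sequence of permutons has a $\dcut$-convergent subsequence.

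For (i), I would observe that the uniform-marginals property is preserved under weak convergence: for any $f\in C([0,1])$, the map $\mu \mapsto \int f(x)\,\D\mu(x,y) - \int_0^1 f\,\D\lambda$ is weak-$*$ continuous on $\mathcal{P}([0,1]^2)$ and vanishes on permutons, so it vanishes on any weak-$*$ limit. Ranging $f$ over a countable dense family in $C([0,1])$ shows the first marginal of any limit is Lebesgue, and symmetrically for the second.

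The substantive step is (ii). The easy direction---$\dcut$-convergence implies weak-$*$ convergence---follows because continuous functions on $[0,1]^2$ are uniformly approximable by step functions supported on finite unions of axis-parallel rectangles. The harder direction is to promote weak-$*$ convergence of permutons to $\dcut$-convergence, since the functionals $\mu\mapsto \mu(S\times T)$ are not weak-$*$ continuous in general. Here the crucial leverage is the uniform-marginal property: for any $\eps>0$, I would fix the equipartition of $[0,1]$ into $m=\lceil 1/\eps\rceil$ intervals $I_1,\ldots,I_m$. Every Borel $S\subseteq[0,1]$ is $\eps$-close in Lebesgue measure to a union $S'$ of some of the $I_j$, and by uniform marginals the estimate $|\Gamma(S\times T)-\Gamma(S'\times T')|\le \lambda(S\triangle S')+\lambda(T\triangle T')<2\eps$ holds uniformly in $T$ and in \emph{every} permuton $\Gamma$. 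Hence the supremum defining $\dcut(\Gamma_n,\Gamma)$ is attained up to $O(\eps)$ on the finite family of rectangles $I_i\times I_j$; and for such a fixed finite family, weak-$*$ convergence---after smoothing the indicator of $I_i\times I_j$ by a continuous bump and controlling the error using the uniform marginal bound on the boundary strip---yields the required pointwise convergence.

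The main obstacle is precisely this last uniform-in-$(S,T)$ reduction: one must use the uniform marginals to prevent the supremum from being attained on wildly oscillating sets, so that it suffices to test weak convergence against a fixed finite collection of smoothed indicators. Once this is established, combining (i) and (ii) immediately gives $\dcut$-compactness of the space of permutons.
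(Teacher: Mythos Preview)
The paper does not prove this theorem at all; it is stated with a citation to \cite{HoKo13} and used as a black box. So there is no ``paper's own proof'' to compare against. Your proposal is the standard route taken in the literature (weak-$*$ compactness of $\mathcal{P}([0,1]^2)$, closedness of the uniform-marginal condition, and identification of the weak-$*$ and $\dcut$ topologies on permutons), and steps~(i) and the easy half of~(ii) are fine.

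There is, however, a genuine slip in the hard half of~(ii). You assert that every Borel $S\subseteq[0,1]$ is $\eps$-close in Lebesgue measure to a union of cells from the \emph{fixed} equipartition $I_1,\ldots,I_m$ with $m=\lceil 1/\eps\rceil$. This is false: take $S$ to be the union of the left halves of all the $I_j$; then every union $S'$ of full cells satisfies $\lambda(S\triangle S')\ge 1/2$. Consequently the reduction of the supremum in $\dcut$ to the finite family $\{I_i\times I_j\}$ does not go through as written when $S,T$ range over arbitrary Borel sets.

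Two remedies. First, in \cite{HoKo13} the rectangular distance is defined with $S,T$ ranging over \emph{intervals}; under that reading your grid approximation is correct (round endpoints to the grid) and the rest of your argument works verbatim. Second, if one insists on the supremum over all Borel $S,T$ as the paper's display suggests, you can still get there: show first that weak-$*$ convergence of permutons implies uniform convergence of the bivariate distribution functions $F_n(a,b)=\Gamma_n([0,a]\times[0,b])$ to $F$ (pointwise convergence follows from Portmanteau since $\Gamma$ assigns zero mass to axis-parallel lines by uniform marginals; upgrade to uniform convergence by the 2-dimensional analogue of the Glivenko--Cantelli/P\'olya argument using monotonicity and continuity of $F$). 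This gives $\dcut^{\mathrm{int}}(\Gamma_n,\Gamma)\to 0$. Then, for arbitrary Borel $S,T$, approximate each by a finite union of intervals to within $\eps$ in Lebesgue measure; the uniform-marginal bound you already wrote handles the approximation error, and for the finitely many remaining rectangles you now know convergence. The number of rectangles depends on $S,T$, so one must organise this as ``for each $\eps$, eventually $\sup_{S,T}\le 5\eps$'' rather than as a single finite family; alternatively, pass through pattern densities as in \cite{HoKo13}.
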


In fact, Theorem~\ref{thm:compact} follows immediately from the fundamental theorem of Prokhorov that the weak topology (recalled in Section~\ref{ssec:LevyProkhorov}) of probability measures on $[0,1]^2$ is compact, since the rectangular distance gives the weak topology, and the closed subset of probability measures with uniform marginals is exactly the set of permutons, hence it is compact.

Furthermore, we will need that the convergence in rectangular distance also induces the convergence in pattern densities.

\begin{thm}[\cite{HoKo13}]\label{thm:densitiescontinuous}
 Suppose that $\pi_1,\pi_2,\ldots$ is a sequence of permutations of growing order that converges to a permuton $\Gamma$ in the rectangular distance. Then for any $k\in\NN$ and any $A\in\mathbb{S}(k)$ we have $\lim_{n\rightarrow \infty} t(A,\pi_n)=t(A,\Gamma)$.
\end{thm}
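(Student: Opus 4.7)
The plan is to reduce the combinatorial quantity $t(A,\pi_n)$ to an evaluation of the product measure $\Psi_{\pi_n}^{\otimes k}$ on the geometric set $S_A$, and then transport continuity from product rectangles (where $\dcut$-convergence applies directly, coordinate by coordinate) to $S_A$ via a grid approximation whose error is controlled by the uniform marginal property.

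First I would verify the combinatorial--measure identity $t(A,\pi_n) = k!\,\Psi_{\pi_n}^{\otimes k}(S_A) + O_k(1/n)$. When one samples $k$ points $p_1,\ldots,p_k$ independently from $\Psi_{\pi_n}$, each $p_a$ lies in a uniformly random column $[(i-1)/n,i/n)\times[(\pi(i)-1)/n,\pi(i)/n)$; the probability that the $k$ columns are pairwise distinct is $1-O_k(1/n)$, and conditional on that event the ordered tuple of columns is a uniformly random $k$-subset of $[n]$ equipped with a uniformly random ordering, so the probability of simultaneously observing $x_1<\cdots<x_k$ and pattern $A$ equals $t(A,\pi_n)/k!$. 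Since $n\to\infty$, it suffices to show $\Psi_{\pi_n}^{\otimes k}(S_A)\to\Gamma^{\otimes k}(S_A)$.

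Next, for any product rectangle $R=\prod_{a=1}^{k}(R_a\times R'_a)\subset([0,1]^2)^k$, the rectangular distance gives $|\Psi_{\pi_n}(R_a\times R'_a)-\Gamma(R_a\times R'_a)|\le \dcut(\Psi_{\pi_n},\Gamma)\to 0$ for each $a$; by independence $\Lambda^{\otimes k}(R)=\prod_a\Lambda(R_a\times R'_a)$ for $\Lambda\in\{\Psi_{\pi_n},\Gamma\}$, and continuity of the product of $k$ numbers in $[0,1]$ upgrades this to $\Psi_{\pi_n}^{\otimes k}(R)\to\Gamma^{\otimes k}(R)$. The same conclusion extends by finite additivity to any finite disjoint union of such product rectangles.

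To bridge product rectangles and $S_A$, I would fix a mesh $1/M$ grid on $[0,1]$, form the finite collection of product boxes $\prod_{a=1}^{k}[t_{i_a-1},t_{i_a})\times[t_{j_a-1},t_{j_a})$, and let $U_M$ be the union of those boxes entirely contained in $S_A$. The symmetric difference $S_A\triangle U_M$ is covered by the $2\binom{k}{2}$ bands $\{|x_i-x_j|<1/M\}$ and $\{|y_i-y_j|<1/M\}$; because every permuton $\Lambda$ has Lebesgue marginals, each band has $\Lambda^{\otimes k}$-measure at most $2/M$ (the relevant two-coordinate marginal is Lebesgue squared on $[0,1]^2$), so $\Lambda^{\otimes k}(S_A\triangle U_M)\le 2k^2/M$ \emph{uniformly} in $\Lambda$, hence uniformly in $n$. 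Given $\eps>0$ choose $M$ so that $2k^2/M<\eps/(3k!)$, then by the previous paragraph pick $n_0$ so that $|\Psi_{\pi_n}^{\otimes k}(U_M)-\Gamma^{\otimes k}(U_M)|<\eps/(3k!)$ for $n\ge n_0$; a triangle inequality combined with Step~1 gives $|t(A,\pi_n)-t(A,\Gamma)|<\eps$ eventually.

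The only delicate point is ensuring the discretization error is uniform in $n$, which is exactly what uniform marginals deliver; the remaining steps are standard. I would not expect any genuine obstacle, but the bookkeeping between the combinatorial density, its product-measure reformulation, and the grid approximation is what needs careful accounting.
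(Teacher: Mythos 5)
Your argument is correct. Note that the paper itself offers no proof of this statement --- it is quoted from \cite{HoKo13} --- so there is nothing internal to compare against; your derivation is the standard one for this equivalence. All three steps check out: the sampling identity $t(A,\pi_n)=k!\,\Psi_{\pi_n}^{\otimes k}(S_A)+O_k(1/n)$ (the collision event between columns has probability $O_k(1/n)$, and conditioned on distinct columns the ordered tuple gives a uniform $k$-subset with a uniform ordering), the convergence on finite unions of product boxes directly from the definition of $\dcut$, and the key uniformity of the discretization error $\Lambda^{\otimes k}(S_A\triangle U_M)\le 2k^2/M$, which holds for every permuton simultaneously because the two-coordinate marginals of $\Lambda^{\otimes k}$ relevant to each band $\{|x_i-x_j|<1/M\}$ or $\{|y_i-y_j|<1/M\}$ are exactly Lebesgue measure squared. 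This last point is indeed the only place where the permuton structure (rather than mere weak convergence) is used, and you have identified and handled it correctly.
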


\subsection{The Lévy-Prokhorov metric}\label{ssec:LevyProkhorov}
We will make use of the Lévy-Prokhorov metric to compare probability measures on $[0,1]$.

\begin{defi}[Lévy-Prokhorov metric]
For $\eps>0$ and a set $A\subseteq[0,1]$ we define the \emph{$\eps$-neighbourhood of $A$} by
$$A^{\uparrow\eps}=[0,1]\cap\bigcup_{q\in A}(q-\eps,q+\eps)\;.$$
Let $\alpha$ and $\beta$ be two Borel probability measures on $[0,1]$. The \emph{Lévy-Prokhorov distance} between $\alpha$ and $\beta$ is defined by
$$\dlp(\alpha, \beta) = \inf\{ \eps > 0 \mid \forall A \in \mathcal{B}([0,1])\;:\; \alpha(A) \leq \beta(A^{\uparrow\eps}) + \eps,\; \beta(A) \leq \alpha(A^{\uparrow\eps}) + \eps\}\;.$$
\end{defi}

Recall that a sequence of Borel probability measures $\alpha_1,\alpha_2,\ldots$ on $[0,1]$ \emph{converges weakly} to a measure $\alpha$ if for every Borel set $A\subset[0,1]$ we have $\lim_n \alpha_n (A)=\alpha(A)$. It is well-known that the Lévy-Prokhorov metric is a metrization of the topology from weak convergence. It is also well-known that the Lévy-Prokhorov metric is separable, that is, there exists a countable set $\{m_1,m_2,\ldots\}$ of Borel probability measures on $[0,1]$ so that for every Borel probability measure $\gamma$ on $[0,1]$ and every $\eps>0$ there exists an index $i\in\NN$ so that $\dlp(m_i,\gamma)<\eps$. Last, recall that for any metric space, the properties of being separable, having the property of Lindel\"of and being second-countable are all equivalent.\footnote{We do not give definitions since the only way we use them is that we feed them into another theorem.} In particular, we will use that the Lévy-Prokhorov metric gives a second-countable space.

\begin{lem}\label{lem:measure}
    Let $\Gamma$ be a probability measure on $[0,1]^2$.
    For all $\delta > 0$, there exists $h \in \NN$ and a partition 
    $[0,1] = J_1 \cup J_2 \cup \ldots \cup J_h$ of intervals
    such that for a disintegration $\{\Gamma_x\}_{x \in[0,1]}$ of $\Gamma$
    there exists $X \subset [0,1]$ with $\lambda(X) < \delta$
    such that $\forall i \in [h], x,y \in J_i \setminus X$: $\dlp(\Gamma_x, \Gamma_y) < \delta$.
\end{lem}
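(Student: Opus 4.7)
The plan is to apply Lusin's theorem to the measurable map $f\colon[0,1]\to\mathfrak{M}$ defined by $f(x)\coloneqq\Gamma_x$, where the target is equipped with the Lévy-Prokhorov metric. The measurability of this map is guaranteed by the footnote in the paper, and the paragraph preceding the lemma records that $(\mathfrak{M},\dlp)$ is separable and second-countable. These are exactly the hypotheses that let one extend the classical Lusin theorem from real-valued targets to maps taking values in a second-countable metric space: the map $f$ agrees with a continuous function off a set of arbitrarily small Lebesgue measure, and this exceptional set can moreover be taken to be open, so that its complement is a compact subset of $[0,1]$.

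Concretely, I would proceed in three steps. First, apply Lusin's theorem with parameter $\delta$ to obtain a compact $K\subseteq[0,1]$ with $\lambda([0,1]\setminus K)<\delta$ such that $f\vert_K$ is continuous. Second, exploit the compactness of $K$ to upgrade continuity to uniform continuity: there exists $\eta>0$ such that whenever $x,y\in K$ satisfy $|x-y|<\eta$, one has $\dlp(\Gamma_x,\Gamma_y)<\delta$. Third, choose $h\coloneqq\lceil 1/\eta\rceil$ and let $J_1,\dots,J_h$ be consecutive half-open intervals partitioning $[0,1]$, each of length at most $\eta$. Setting $X\coloneqq[0,1]\setminus K$ gives $\lambda(X)<\delta$, and for any $i$ and any $x,y\in J_i\setminus X$, both points lie in $K$ and satisfy $|x-y|<\eta$, so $\dlp(\Gamma_x,\Gamma_y)<\delta$ as required.

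The main potential obstacle is justifying the version of Lusin's theorem just invoked. The most familiar statements cover real-valued functions, but the generalisation to maps into second-countable (or, more narrowly, Polish) metric target spaces is standard and follows from essentially the classical proof: separability of the target permits one to cover it by countably many balls of small radius, whose preimages form a countable measurable partition of $[0,1]$, and each of these preimages can then be inner-approximated by a compact set so that the resulting errors sum to less than $\delta$. Since the paper has already recorded the second-countability of $(\mathfrak{M},\dlp)$, no further preparation is needed, and the rest of the argument is the routine uniform-continuity-plus-short-intervals pattern sketched above.
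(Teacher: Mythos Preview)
Your proposal is correct and matches the paper's own proof essentially line for line: apply Lusin's theorem for maps into the second-countable space $(\mathfrak{M},\dlp)$ to obtain continuity of $x\mapsto\Gamma_x$ off an open set $X$ of measure $<\delta$, upgrade to uniform continuity on the compact complement, and then chop $[0,1]$ into consecutive intervals shorter than the resulting modulus of continuity. The only cosmetic difference is that the paper phrases Lusin's conclusion as ``$X$ open'' rather than ``$K$ compact'', which is of course equivalent here.
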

\begin{proof}
	We shall use Lusin's Theorem in the following form, see~\cite[Theorem 17.12]{MR1321597}. Suppose that $M$ is a second-countable topological space.
	Suppose that $f:[0,1]\rightarrow M$ is measurable. Then for every $\delta>0$ there exists an open set $X\subset [0,1]$ with $\lambda(X)<\delta$ so that $f_{\restriction [0,1]\setminus X}$ is continuous.
	
	We are in this setting of Lusin's Theorem as the space of all Borel probability measures on $[0,1]$ equipped with the Lévy-Prokhorov metric is second-countable, and the map $f:x\mapsto \Gamma_x$ is Borel. We use it with the same parameter as in the above statement. Since $[0,1]\setminus X$ is compact and the function $f$ is continuous on it, it is also uniformly continuous. That is, for the given $\delta$ there exists $\alpha>0$ so that for any $x,y\in [0,1]\setminus X$ with $|x-y|\le \alpha$ we have $\dlp(\Gamma_x,\Gamma_y)<\delta$.
	The lemma follows by choosing $(J_i)_{i\in [h]}$ to be consecutive intervals of length $\alpha$.
\end{proof}

\section{Proof of Theorem~\ref{thm:molecule}}\label{sec:pfmolecule}
\subsection{Proof of the upper bound}
Suppose for the sake of contradiction  that the assertion does not hold. Then there exists a pattern $A$ of order $k$, an $A$-avoiding permuton $\Gamma$ together with a disintegration $\{\Gamma_x\}_{x\in[0,1]}$ and a set $X\subset[0,1]$ with $\lambda(X)>0$ such that for every $x\in X$ the fiber $\Gamma_x$ is not a $\le\left(k-1\right)$-molecule. 

\begin{figure}
    \centering
    \includegraphics[scale=1]{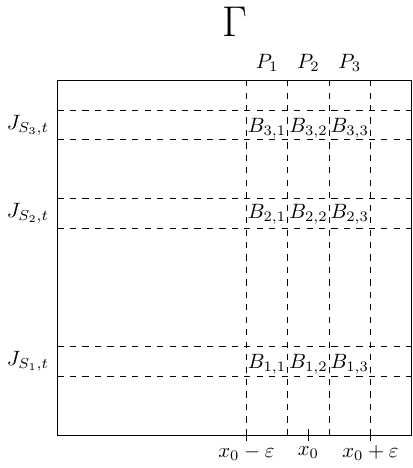}
	\caption{An example for the notation in Section~\ref{sec:pfmolecule} with $k=3$. Each of the boxes $B_{i,j}$, $i,j\in[3]$, contains some positive mass of $\Gamma$, hence any $3\times 3$-pattern occurs with positive density.}
	\label{fig:pfmolecule}
\end{figure}
Figure~\ref{fig:pfmolecule} depicts the setting we introduce below.
For $t\in\mathbb N$ and $m\in[t-1]$ we define $J_{m,t}\coloneqq[(m-1)/t,m/t)$ and $J_{t,t}\coloneqq[(t-1)/t,1]$. For $S\in\binom{[t]}{k}$ and $i\in[k]$, write $S_i$ for the $i$th smallest element of $S$. For each $S\in\binom{[t]}{k}$, set $X_{S,t}\coloneqq\{x\in X\mid\Gamma_{x}(J_{S_i,t})>0\;\forall i\in[k]\}$. Note that for $x\in X$, since $\Gamma_x$ is not $\le\left(k-1\right)$-molecule, we have that there exist $t\in\mathbb N$ and $S\in\binom{[t]}{k}$ such that $x\in X_{S,t}$. Hence $X=\bigcup_{t\in\mathbb N, S\in\binom{[t]}{k}}X_{S,t}$. Recalling that $\lambda(X)>0$, it follows from the countable subadditivity of measures that there exist $t\in\mathbb N$ and $S\in\binom{[t]}{k}$ such that $\lambda(X_{S,t})>0$. By Lebesgue's density theorem, we can find a density point $x_0$ of $X_{S,t}$  and choose $\eps>0$ small enough such that $\lambda([x_0-\eps,x_0+\eps]\cap X_{S,t})>2\varepsilon (k-1)/k$. Then for $i\in[k]$ define $P_i\coloneqq [x_0-\varepsilon+2\varepsilon\cdot (i-1)/k,x_0-\varepsilon+2\varepsilon\cdot i/k)$ and note that $\lambda(X_{S,t}\cap P_i)>0$. Finally, define $B_{i,j}\coloneqq P_i\times J_{S_j,t}$, for $i,j\in[k]$, and observe that every $p\in\prod_{i\in[k]} B_{i,A(i)}$ induces $A$ and therefore
\[t(A,\Gamma)\geq\Gamma^{\otimes k}\left(\prod_{i\in[k]} B_{i,A(i)}\right)\geq\prod_{i\in[k]}\left(\int_{x\in P_i\cap X_{S,t}}\Gamma_x(J_{S_{A(i)},t})\right)>0\;,\]
which is a contradiction.

\subsection{Optimality}\label{ssec:optimalitymolecules}
Given a pattern $A\in\mathbb{S}(k)$, we will construct an $A$-free permuton whose fibers with respect to a disintegration along the x-axis are almost all $(k-1)$-molecules. The role of the two coordinates is exchangeable, and it turns out that it is notationally simpler to construct an $A^{-1}$-free
permuton whose horizontal fibers are $(k-1)$-molecules.

Set $\pi=A^{-1}$. We consider the piecewise linear function $L:[0,1]\to[0,1]$ consisting of $k-1$ pieces of slope $k-1$ or $1-k$ satisfying
for $i=1, \dots ,k-1$ that $L(\frac{i-1}{k-1})=0$ and $L(\frac{i}{k-1})=1$ if $\pi(i)>\pi(i+1)$, while $L(\frac{i-1}{k-1})=1$ and $L(\frac{i}{k-1})=0$ if $\pi(i)<\pi(i+1)$.
(At the endpoints this might not be well-defined: in this case we choose the value defined on the interval on the left.)
An example is given in Figure~\ref{fig:Piecewise}.
\begin{figure}[t]
    \centering
	\includegraphics[scale=0.7]{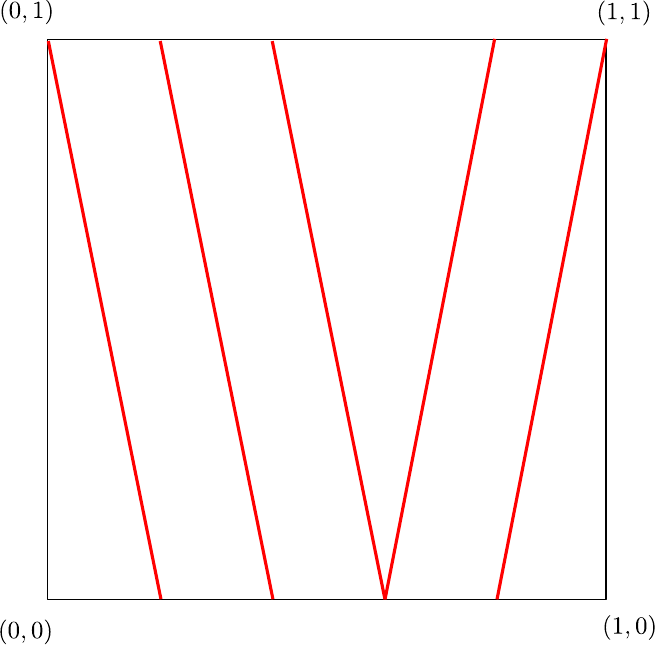}
	\caption{Construction of a piecewise linear function $L$ in Section~\ref{ssec:optimalitymolecules} for $\pi=(145632)$.}
	\label{fig:Piecewise}
\end{figure}

Putting an appropriate multiple of the 1-dimensional Lebesgue measure on the graph of the function $L$ we obtain a permuton (that is, both uniform marginal properties are satisfied), which we call $\Gamma$. Firstly, almost all horizontal fibers of $\Gamma$ are $(k-1)$-molecules. So, it remains to argue that $t(\pi,\Gamma)=0$. To this end we prove the following.
\begin{unnumberedclm}
Consider $\ell\in[k]$ and $0\leq x_1 < \dots < x_{\ell} \leq 1$ such that $x_i\not\in\{\frac{j}{k-1}:j\in\ZZ\}$ for all $i\in [\ell]$. If the set of points $\{ (x_1,L(x_1))\ldots,(x_{\ell},L(x_{\ell})) \}$ induces the pattern given by the first $l$ entries of $\pi$ then $x_{\ell} > \frac{\ell-1}{k-1}$.
\end{unnumberedclm}
Observe that the above claim indeed proves that $t(\pi,\Gamma)=0$, since it asserts that we cannot find geometric representations of $\pi$ in the support of $\Gamma$, except for the nullset of $k$-tuples where at least one of the coordinates $x_i$ lies in $\{\frac{j}{k-1}:j\in\ZZ\}$.

Let us now prove the claim by induction on $\ell$. The base case $\ell=1$ is obvious. Assume that the claim holds for $\ell-1$ and, in particular, for the $(\ell-1)$-tuple $(x_1,\ldots,x_{\ell-1})$. If $x_{\ell-1} > \frac{\ell-1}{k-1}$ then we are done, since $x_{\ell}>x_{\ell-1}$. It remains to handle the case $x_{\ell-1} \in (\frac{\ell-2}{k-1},\frac{\ell-1}{k-1})$. Then $x_{\ell} \notin (\frac{\ell-2}{k-1},\frac{\ell-1}{k-1})$, since $x_{\ell} >x_{\ell-1}$, and if 
$\pi(\ell-1)>\pi(\ell)$ then $L$ is monotone increasing on this interval, while monotone decreasing otherwise. This completes the proof the theorem.

Note that while a.e. vertical fiber of the construction is a $(k-1)$-molecule, the horizontal fibers are $1$-molecules, i.e., their support is a single atom of measure~1. It would be interesting to see, which lower bounds on the size of the support of a.e. horizontal and vertical fiber would guarantee that a given pattern has positive density.

\section{Proof of Theorem~\ref{thm:Relocation}}
Let us first give an overview of the proof. By using the compactness of the space of permutons it suffices to show for a sequence $\pi_1,\pi_2,\dots$ of not necessarily $A$-avoiding permutations converging to an $A$-free permuton $\Gamma$ that for $n$ large enough we can find similar but $A$-avoiding permutations $\tilde\pi_n$. Suppose that $\pi_n$ is close to $\Gamma$ in the rectangular distance. Fix a disintegration $\{\Gamma_x\}_{x \in[0,1]}$ of $\Gamma$. We generate an $n$-tuple of numbers uniformly at random in $[0,1]$, and read them in increasing order as $x_1<x_2<\cdots<x_n$. We set $y_i \coloneqq (\pi_n(i)-0.5)/n$. The set of points $\{(x_1,y_1)$, \ldots, $(x_n,y_n)\}$ is a geometric representation of $\pi_n$, and the uniform probability measure supported on this set is close to $\Gamma$ in the rectangular distance. 
The key idea is that almost any $n$-tuple of points $(\tilde{x}_1,\tilde{y}_1)$, \ldots, $(\tilde{x}_n,\tilde{y}_n)$, whose x- and y-coordinates are pairwise distinct and where all the points lie in the support of the measure $\Gamma$, is a geometric representation of an $A$-free permutation. We shall use this with the choice $\tilde{x}_1=x_1$, \ldots, $\tilde{x}_n=x_n$. We want to alter the y-coordinates of the points $(x_i,y_i)$ such that the new $\tilde{y}_i$ lies in the support of the measure $\Gamma_{x_i}$, which is essentially equivalent to the point $(x_i,\tilde{y}_i)$ lying in the support of $\Gamma$.

Here is where Theorem~\ref{thm:molecule} comes into play. We know that almost every $\Gamma_{x_i}$ is a collection of at most $k-1$ atoms. Since $\pi_n$ is close to $\Gamma$ in the rectangular distance, we can also deduce that typically $\Gamma_{x_i}$ must have positive mass around $y_i$. Therefore, there exists an atom $\tilde{y}_i$ of $\Gamma_{x_i}$ which is close to $y_i$. The points $(x_i,\tilde{y}_i)$ then induce our new permutation $\tilde{\pi}_n$. See Figure~\ref{fig:Example}.

\begin{figure}[t]
    \centering
    \includegraphics[width=\linewidth]{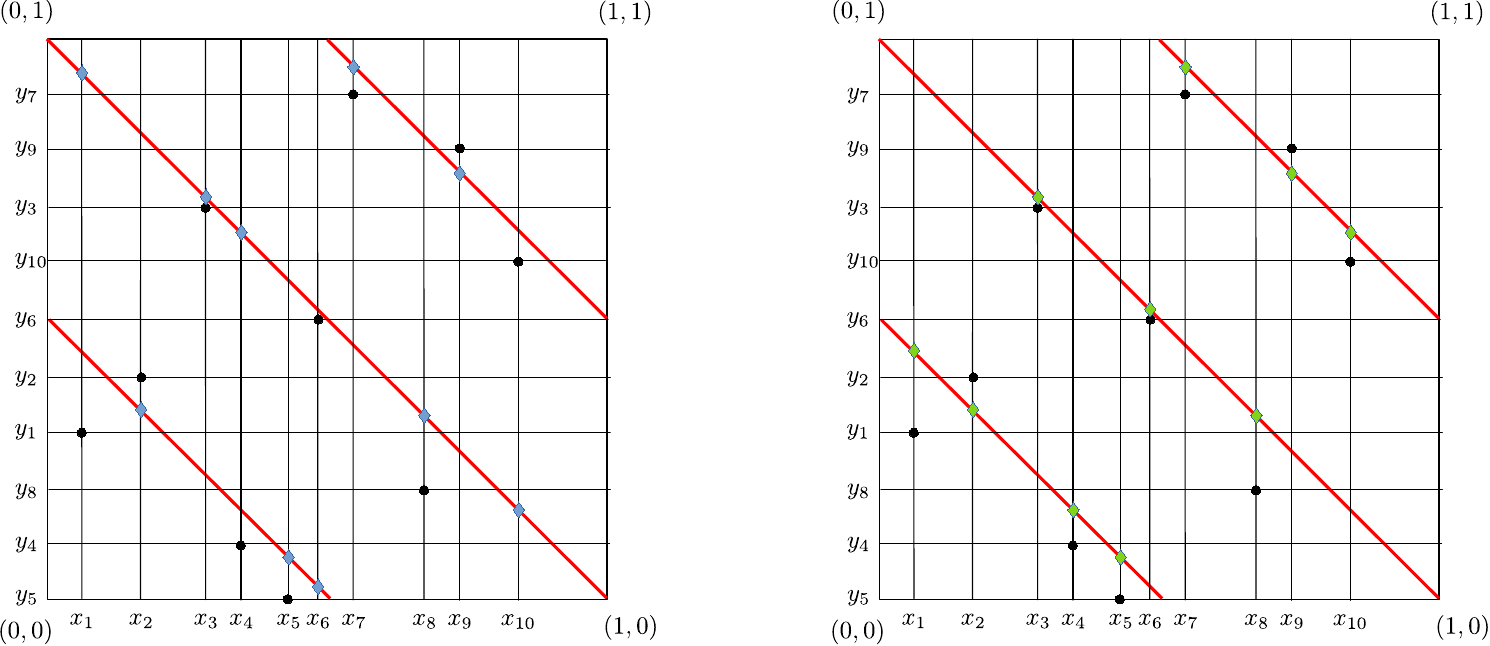}
    \caption{An example described in Section~\ref{ssec:remlemma}. In red, the support of a permuton $\Gamma$ which is free of the pattern $A=(1234)$. Each fiber of the disintegration of $\Gamma$ consists of two atoms, each of weight $0.5$. Black dots show points $(x_i,y_i)$ representing a permutation $\pi_{10}$ which is close to $\Gamma$. \emph{Left:} Grey diamonds show positions of points $(x_i,\tilde{y}_i)$ which represent an $A$-free permutation which might be an outcome of a random resnapping on the fibers $\Gamma_{x_i}$. The issue is that $|y_i-\tilde{y}_i|$ is big for $i=1,4,6,10$. \emph{Right:} Green diamonds correspond to a resnapping to the closest atom on each fiber $\Gamma_{x_i}$.}
	\label{fig:Example}
\end{figure}

\bigskip
We can now proceed with details. We shall prove Theorem~\ref{thm:Relocation} in the following form.

\begin{thm}\label{thm:weprove}
		Let $k\in\NN$ and $A\in\mathbb{S}(k)$. Suppose that we have a sequence of permutations $(\pi_n\in\mathbb{S}(n))_n$ converging to an $A$-free permuton $\Gamma$ in the rectangular distance. Then for every $\eps>0$ there exists $n_0 \in \NN$ such that for every $n > n_0$ there exists an $A$-free permutation $\tilde{\pi}_n\in \mathbb{S}(n)$ such that 
	\begin{equation}\label{milujuvzorecky}
	\sum_{i = 1}^{n}|\pi_n(i)-\tilde{\pi}_n(i)| < \eps n^2\;.
	\end{equation}
\end{thm}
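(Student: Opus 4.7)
The plan is to use Theorem~\ref{thm:molecule} to describe $\Gamma$ as being, up to small error, supported on a finite union of horizontal bands inside finitely many vertical strips, and then to build $\tilde{\pi}_n$ by rearranging $\pi_n$ locally within each band in a monotone fashion inherited from $\Gamma$.

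Fix $\eps>0$ and a small $\delta>0$ (chosen later in terms of $\eps$ and $k$). Lemma~\ref{lem:measure} yields a partition $[0,1]=J_1\cup\cdots\cup J_h$ and an exceptional set $X$ with $\lambda(X)<\delta$ such that $\dlp(\Gamma_x,\Gamma_y)<\delta$ for all $x,y\in J_i\setminus X$. By Theorem~\ref{thm:molecule} we pick representatives $x_i^*\in J_i\setminus X$ with $\Gamma_{x_i^*}$ a $\le(k-1)$-molecule at heights $y_{i,1}<\cdots<y_{i,m_i}$ ($m_i\le k-1$). Define the bands $B_{i,j}\coloneqq J_i\times[y_{i,j}-\delta,y_{i,j}+\delta]$; the L\'evy--Prokhorov bound together with $\lambda(X)<\delta$ yields $\Gamma([0,1]^2\setminus\bigcup_{i,j}B_{i,j})=O(\delta)$. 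Since $\pi_n\to\Gamma$ in the rectangular distance and $\bigcup_{i,j} B_{i,j}$ is a union of finitely many rectangles, for all sufficiently large~$n$ the permuton $\Psi_{\pi_n}$ also carries mass $1-O(\delta)$ on the bands, so at most $O(\delta n)$ positions $i\in[n]$ are \emph{bad}, i.e., have $(i/n,\pi_n(i)/n)\notin\bigcup_{i,j} B_{i,j}$. Write $I_{i,j}$ for the set of good positions in $B_{i,j}$ and $V_{i,j}\coloneqq\pi_n(I_{i,j})$.

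Construct $\tilde{\pi}_n$ by taking, in each band, a monotone bijection between $I_{i,j}$ and $V_{i,j}$---with direction chosen band by band, as discussed below---and extending arbitrarily on the bad positions. Each good position is moved by at most $2\delta n$ (both $\pi_n(i)$ and $\tilde{\pi}_n(i)$ lie in a band of vertical extent $2\delta$), and the $O(\delta n)$ bad positions contribute at most $O(\delta n^2)$; hence $\sum_i |\pi_n(i)-\tilde{\pi}_n(i)| = O(\delta n^2) < \eps n^2$ once $\delta$ is small enough.

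The main obstacle is verifying $A$-freeness of $\tilde{\pi}_n$. Suppose that a $k$-tuple $i_1<\cdots<i_k$ of good positions induces the pattern $A$ in $\tilde{\pi}_n$, and group the positions by the bands containing them. When the $k$ positions lie in $k$ distinct bands, the pattern is determined by the band heights (with ties between same-strip bands resolved by the band indices), so sampling one point from each of these bands in $\Gamma$ induces $A$ with positive $\Gamma^{\otimes k}$-probability---contradicting $t(A,\Gamma)=0$. The delicate case is when several positions share a band: then the within-band direction controls their relative order in $\tilde{\pi}_n$, while in $\Gamma$ the relative order is controlled by the monotonicity of the atom curve $x\mapsto y_{i,j}(x)$. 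The key structural observation is that $A$-freeness forces each atom curve to be weakly monotone within each (possibly further refined) strip---otherwise configurations consisting of two same-band atoms together with an external atom would contribute positive $A$-density to $\Gamma$. After this additional refinement of $\{J_i\}$, we set the within-band direction of $\tilde{\pi}_n$ to match the monotonicity of the atom curve, so that every order relation between good positions in $\tilde{\pi}_n$ mirrors an order relation realised in $\Gamma$ with positive probability; hence any copy of $A$ in $\tilde{\pi}_n$ would lift to positive $A$-density in~$\Gamma$---again a contradiction. The $k$-tuples involving bad positions account for at most $O(\delta n^k)$ of all $k$-tuples and are absorbed by a final local adjustment to $\tilde{\pi}_n$ on the bad positions.
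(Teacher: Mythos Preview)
Your argument has a genuine gap at the step where you claim that ``$A$-freeness forces each atom curve to be weakly monotone within each (possibly further refined) strip''. This is false for general $A$. The paper itself provides a counterexample in Section~\ref{ssec:nondifferentiable}: the $(3142)$-avoiding permuton built there is supported on the graph of a single function $f:[0,1]\to[0,1]$ (so a single atom curve), and that function is not monotone on \emph{any} subinterval---indeed its restriction to any positive-measure set is not even differentiable. Hence no finite refinement of the strips will make the atom curves monotone, and your within-band monotone rearrangement cannot be made to mirror the order relations realised in~$\Gamma$. The case analysis for $A$-freeness therefore collapses.

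A second gap is the treatment of bad positions. You place them ``arbitrarily'' and then appeal to ``a final local adjustment'' to kill the resulting copies of $A$. But the statement requires $\tilde{\pi}_n$ to be \emph{exactly} $A$-free, not to have small $A$-density; a single bad position can participate in $\Theta(n^{k-1})$ copies of $A$, and it is not explained how to eliminate all of them without moving many good positions or creating new copies elsewhere.

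The paper avoids both difficulties with a cleaner idea: instead of working with bands around representative atoms, it chooses $x_i\in((i-1)/n,i/n)$ generically and snaps $y_i$ to an actual atom $\tilde{y}_i$ of the \emph{true} fiber $\Gamma_{x_i}$. The point is that any finite configuration of points lying on the support of $\Gamma$ (with distinct coordinates, which holds almost surely by the uniform-marginal property) is automatically $A$-free, because otherwise the product of the atom masses would contribute positive $A$-density to $\Gamma$. No monotonicity of atom curves is needed, and there is no residual pattern to clean up---the few indices where no nearby atom exists are simply snapped to \emph{some} atom on their own fiber, which still keeps the configuration inside the support.
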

\begin{proof}[Deducing Theorem~\ref{thm:Relocation} from Theorem~\ref{thm:weprove}]
	Suppose that Theorem~\ref{thm:Relocation} does not hold. That is, for a fixed pattern $A$, there exists a number $\eps>0$ and a sequence of permutations $\pi_n$ with $t(A,\pi_n)\to 0$ for which we cannot find similar but $A$-free permutations. By compactness of the space of permutons (Theorem~\ref{thm:compact}), there exists a subsequence $(\pi_{n_\ell})$ which converges to a certain permuton $\Gamma$ in the rectangular distance. By continuity of densities (Theorem~\ref{thm:densitiescontinuous}), $\Gamma$ is $A$-free. This setting contradicts Theorem~\ref{thm:weprove}, as was needed.
\end{proof}

\begin{proof}[Proof of Theorem~\ref{thm:weprove}]
Given $\pi_n\overset{\dcut}{\rightarrow}\Gamma$, where $\Gamma$ is $A$-free, and $\eps>0$ we fix constants $\zeta,\delta>0$ and $h,n_0\in\mathbb N$ according to the following hierarchy
\[\tfrac{1}{n_0} \ll \zeta \ll \tfrac{1}{h} \ll \delta \ll \eps\;.\]

First, we apply Lemma~\ref{lem:measure} to $\Gamma$ and $\delta$ resulting in $h\in\mathbb N$, a partition $[0,1]=\bigcup_{i\in[h]}J_i$ into intervals, a set $X_1\subseteq[0,1]$ with $\lambda(X_1)<\delta$, and a disintegration $\{\Gamma_x\}_{x \in[0,1]}$ of $\Gamma$ such that for every $i\in[h]$ and $x,y\in J_i\setminus X_1$ we have
\begin{equation}\label{eq:distint}
\dlp(\Gamma_x, \Gamma_y) < \delta\;.
\end{equation}

Furthermore, since $\Gamma$ is $A$-free and by Theorem~\ref{thm:molecule} we can fix a null set $X_2\subseteq [0,1]$ such that $\Gamma_x$ is $\le\left(k-1\right)$-molecule for every $x\in [0,1]\setminus X_2$. We then set $X\coloneqq X_1\cup X_2$.

Now, fix a permutation $\pi_n$ with $n\geq n_0$. We want to show that there exists an $A$-free permutation $\tilde{\pi}_n\in \mathbb{S}(n)$ fulfilling~\eqref{milujuvzorecky}. Note that by the constant hierarchy we can choose $n_0$ large enough such that we may assume that
\begin{equation}\label{eq:cutdist}
\dcut(\pi_n, \Gamma)  < \zeta/2\;.
\end{equation}

Set $T\coloneqq\{i\in[n]\mid \lambda(((i-1)/n,i/n)\setminus X)>0\}$. Note that $|T|>(1-\delta) n$, as $\lambda(X)<\delta$. We now generate a random $n$-tuple of points in the unit interval by choosing $x_i$ uniformly at random from $((i-1)/n,i/n)\setminus X$ for every $i\in T$. For $i\in [n]\setminus T$ we choose $x_i$ uniformly at random from $((i-1)/n,i/n)\setminus X_2$. Furthermore, we set 
\begin{equation}\label{eq:whatisy}
y_i\coloneqq (\pi_n(i)-0.5)/n
\end{equation}
and denote the measure which has mass $1/n$ on each point $(x_i,y_i)$ by $\Delta$, $\Delta=\frac1n\sum_{i\in[n]}\mathsf{Dirac}_{(x_i,y_i)}$. While $\Delta$ is not a permuton, it can be thought of as a perturbation of the permuton representation $\Psi_{\pi_n}$ in which each rectangle $[\frac{i-1}{n},\frac{i}{n})\times [\frac{\pi_n(i)-1}{n},\frac{\pi_n(i)}{n})$ is transformed to a single point of the same measure and within that rectangle. In particular $\dcut(\Delta,\pi_n)\le \tfrac{2}{n}$. From the triangle inequality (c.f. Remark~\ref{rem:nonpermutons}) and~\eqref{eq:cutdist} we get
\begin{equation}\label{eq:cutdistlepsi}
\dcut(\Delta,\Gamma)\leq\dcut(\Delta,\pi_n)+\dcut(\pi_n,\Gamma)\leq\tfrac{2}{n}+\zeta/2<\zeta\;.
\end{equation}

\begin{clm}\label{clm:sureprop} With probability $1$ with respect to the choice of the random points $(x_i)_{i\in [n]}$ the following holds.
\begin{enumerate}[label=(\roman*)]
    \item\label{clm:Afree} If $(\tilde{y}_i\in[0,1])_n$ is collection of pairwise distinct points such that for each $i\in[n]$, $\tilde{y}_i$ is an atom on $\Gamma_{x_i}$ then the permutation induced by $((x_i,\tilde{y}_i))_{i\in[n]}$ is $A$-free.
    \item\label{clm:atomsdis} If $\Gamma_{x_i}(y)>0$ for some $i\in[n]$ and $y\in[0,1]$, then $\Gamma_{x_j}(y)=0$ for every $i\neq j$.
\end{enumerate}
\end{clm}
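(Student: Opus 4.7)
The plan is to prove parts~\ref{clm:Afree} and~\ref{clm:atomsdis} separately, in each case reducing the almost-sure statement about the random $(x_i)_i$ to a Lebesgue-negligibility statement on $[0,1]^k$ or $[0,1]^2$, and then transferring it via absolute continuity of the laws of the $x_i$ together with a union bound. Throughout, I write $M_x\coloneqq\{y\in[0,1]:\Gamma_x(\{y\})>0\}$ for the set of atoms of the fiber $\Gamma_x$; by the choice of $X_2$, we have $|M_x|\le k-1$ for every $x\notin X_2$, and by construction of the random variables, $x_i\notin X_2$ always.

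For part~\ref{clm:Afree}, the starting point is $A$-avoidance: $\Gamma^{\otimes k}(S_A)=0$. Fubini together with the disintegration of $\Gamma$ yields
\[
0 = \Gamma^{\otimes k}(S_A) = \int_{[0,1]^k} I(x_1,\ldots,x_k)\, d\lambda^{\otimes k}(x_1,\ldots,x_k),
\]
where $I(x_1,\ldots,x_k)\coloneqq \int\!\cdots\!\int \mathbf{1}_{S_A}\bigl((x_1,y_1),\ldots,(x_k,y_k)\bigr)\, d\Gamma_{x_1}(y_1)\cdots d\Gamma_{x_k}(y_k)$, so $I$ vanishes $\lambda^{\otimes k}$-a.e. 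When $x_j\notin X_2$ for all $j$, each fiber $\Gamma_{x_j}$ is purely atomic on $M_{x_j}$ with strictly positive atomic weights, and $I(x_1,\ldots,x_k)$ is a finite sum of nonnegative terms indexed by choices $\tilde y_j\in M_{x_j}$. Hence ``$I(x_1,\ldots,x_k)=0$'' is \emph{equivalent} to the pointwise statement that no choice of atoms $\tilde y_j\in M_{x_j}$ makes $\bigl((x_j,\tilde y_j)\bigr)_j$ a geometric representation of $A$. For each fixed $k$-subset $\{i_1<\cdots<i_k\}\subseteq[n]$, the joint law of $(x_{i_1},\ldots,x_{i_k})$ is absolutely continuous with respect to $\lambda^{\otimes k}$, being a product of independent laws each dominated by a constant multiple of $\lambda$; thus almost surely no atom-tuple from $M_{x_{i_1}},\ldots,M_{x_{i_k}}$ forms $A$. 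Union-bounding over the finitely many $\binom{n}{k}$ such subsets finishes~\ref{clm:Afree}.

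For part~\ref{clm:atomsdis}, my first step will be to show that $\lambda(F_y)=0$ for \emph{every} $y\in[0,1]$, where $F_y\coloneqq\{x:\Gamma_x(\{y\})>0\}$; this follows from the uniform $y$-marginal of $\Gamma$:
\[
0=\lambda(\{y\})=\Gamma\bigl([0,1]\times\{y\}\bigr)=\int\Gamma_x(\{y\})\,d\lambda(x),
\]
which forces $\Gamma_x(\{y\})=0$ for $\lambda$-a.e.\ $x$, i.e.\ $\lambda(F_y)=0$. Combined with $|M_x|\le k-1$ for $x\notin X_2$, a direct Fubini estimate gives
\[
\lambda^{\otimes 2}\!\left(\bigl\{(x,x'): M_x\cap M_{x'}\neq\emptyset\bigr\}\right) \;\le\; \int \sum_{y\in M_x}\lambda(F_y)\,d\lambda(x) \;=\; 0,
\]
after which absolute continuity of the law of each pair $(x_i,x_j)$ with $i\neq j$, plus a union bound over the $\binom{n}{2}$ pairs, completes~\ref{clm:atomsdis}.

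The conceptual heart of the proof is part~\ref{clm:Afree}, and the place where Theorem~\ref{thm:molecule} is essential: $\le(k-1)$-molecularity of the fibers is precisely what lets us upgrade the weighted-zero statement ``$I=0$'' into the pointwise statement ``no tuple of atoms forms $A$''. Part~\ref{clm:atomsdis} is by contrast a purely measure-theoretic consequence of the uniform marginal property combined with the finiteness of each $M_x$; the only small subtlety is that a naive union bound over all $y\in[0,1]$ fails, but the Fubini bound above neatly sidesteps this by using only the bound $|M_x|\le k-1$ together with $\lambda(F_y)=0$ for every $y$.
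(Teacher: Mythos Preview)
Your argument is correct and close in spirit to the paper's, but with a few differences worth noting.

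For part~\ref{clm:Afree}, the paper argues by contradiction on $[0,1]^n$: it posits a positive-measure set $Q$ of $n$-tuples and a \emph{measurable selection} $x\mapsto\tilde y(x)$ of atoms witnessing the pattern $A$, and then lower-bounds the probability that $n$ independent $\Gamma$-samples contain $A$. Your direct Fubini computation on $[0,1]^k$ sidesteps both the measurable-selection step and the passage from $n$-samples to $k$-samples; the key observation---that for purely atomic fibers the vanishing of $I(x_1,\ldots,x_k)$ is \emph{equivalent} to no atom-tuple forming $A$---is exactly the point, and you make it explicit. Both routes are valid; yours is a bit cleaner.

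For part~\ref{clm:atomsdis}, both proofs establish $\lambda(F_y)=0$ for every fixed $y$: the paper does this via a continuity-of-measure argument (finding a subset where $\Gamma_x(\{y\})>\beta$ and contradicting the marginal on $[y-\alpha,y+\alpha]$), while you use the one-line identity $0=\lambda(\{y\})=\Gamma([0,1]\times\{y\})=\int\Gamma_x(\{y\})\,d\lambda(x)$, which is more direct. Your final step---using finiteness of $M_x$ and Fubini to upgrade ``$\lambda(F_y)=0$ for every $y$'' to the almost-sure pairwise disjointness statement---fills in a transition the paper leaves implicit; the only minor care needed is that the inner sets $F_y$ are measurable (they are, since $x\mapsto\Gamma_x(\{y\})$ is measurable), after which conditioning on one coordinate and integrating out the other is legitimate.
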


\begin{proof}[Proof of Claim]
For proving \ref{clm:Afree} suppose for the sake of contradiction that there exists $Q\subseteq[0,1]^n$ with $\lambda^{\otimes n}(Q)>0$ such that for every $x\in Q$ there exists $\tilde{y}(x)\in[0,1]^n$ such that $\prod_{i=1}^n\Gamma_{x_i}(\tilde{y}(x)_i)>0$ and such that $(x_i,\tilde{y}(x)_i)_{i\in[n]}$ contains the pattern $A$. Furthermore, we can assume that the map $x\mapsto \tilde{y}(x)$ is measurable. Then the probability to sample the pattern $A$ from $\Gamma$ is at least
\[\int_{Q}\prod_{i=1}^n\Gamma_{x_i}(\tilde{y}(x)_i)\D\lambda^{\otimes n}(x)>0\;,\]
contradicting that $\Gamma$ is $A$-free.

For proving \ref{clm:atomsdis} suppose for the sake of contradiction that there exists $y\in[0,1]$ such that for $Q_y\coloneqq\{x\in[0,1]\mid \Gamma_x(y)>0\}$ holds that $\lambda(Q_y)>0$. By continuity of the Lebesgue measure there exists $\beta>0$ and $Q_y'\subseteq Q_y$ with $\lambda(Q_y')>0$ such that for every $x\in Q_y'$ we have $\Gamma_{x}(y)>\beta$. Then for an arbitrary $\alpha>0$ we have
\[\Gamma([0,1]\times[y-\alpha,y+\alpha])\geq \beta\lambda(Q_y')\;,\]
which for $\alpha$ small enough contradicts the uniform marginal property of $\Gamma$.
\end{proof}

We fix an outcome of our random selection of $x_1<\cdots<x_n$ such that the above properties hold. Below, we abbreviate measures of intervals $[a,b]$ on fibers $\Gamma_{x_i}$ as $\Gamma_{x_i}(a,b):=\Gamma_{x_i}([a,b])$.

\begin{clm}\label{clm:posmass} For all but at most $3\sqrt{\delta} n$ many $i\in [n]$ it holds that $\Gamma_{x_i}(y_i - 2\sqrt{\delta}, y_i + 2\sqrt{\delta}) > \delta$.
\end{clm}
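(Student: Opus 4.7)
The plan is to prove the contrapositive directly by bounding the ``bad'' set
\[
B := \{i \in [n] : \Gamma_{x_i}(y_i - 2\sqrt{\delta}, y_i + 2\sqrt{\delta}) \le \delta\}.
\]
I will split $B$ according to the partition $\{J_\ell\}_{\ell \in [h]}$ from Lemma~\ref{lem:measure}, writing $B_\ell := \{i \in B \cap T : x_i \in J_\ell\}$, and bound each $|B_\ell|$ by comparing the empirical mass of $\Delta$ on a suitable rectangle to that of $\Gamma$ via~\eqref{eq:cutdistlepsi}. The indices outside $T$ contribute at most $|[n] \setminus T| \le \delta n$ and need no further work.

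The main step is as follows. Partition $[0,1]$ into $m \le \lceil 1/\sqrt{\delta}\rceil$ consecutive intervals $L_1, \ldots, L_m$, each of length at most $\sqrt{\delta}$. For each $\ell \in [h]$, let $P_\ell := \{p \in [m] : \exists i \in B_\ell \text{ with } y_i \in L_p\}$ and $K_\ell := \bigcup_{p \in P_\ell} L_p$. Every $(x_i, y_i)$ with $i \in B_\ell$ lies in $J_\ell \times K_\ell$, so~\eqref{eq:cutdistlepsi} gives
\[
|B_\ell| \;\le\; n \cdot \Delta(J_\ell \times K_\ell) \;\le\; n \cdot \Gamma(J_\ell \times K_\ell) + n\zeta.
\]
To bound $\Gamma(J_\ell \times K_\ell)$, fix $p \in P_\ell$ and a witness $i \in B_\ell$ with $y_i \in L_p$. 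Since $|L_p| \le \sqrt{\delta}$ and $\delta \le \sqrt{\delta}$, the thickening satisfies $L_p^{\uparrow \delta} \subseteq [y_i - 2\sqrt{\delta}, y_i + 2\sqrt{\delta}]$, so the badness of $i$ yields $\Gamma_{x_i}(L_p^{\uparrow \delta}) \le \delta$. Because $i \in T$ forces $x_i \in J_\ell \setminus X$, Lemma~\ref{lem:measure} gives $\dlp(\Gamma_x, \Gamma_{x_i}) < \delta$ for every $x \in J_\ell \setminus X$, whence $\Gamma_x(L_p) \le \Gamma_{x_i}(L_p^{\uparrow \delta}) + \delta \le 2\delta$. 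Summing over $p \in P_\ell$ (the $L_p$ are pairwise disjoint and $|P_\ell| \le m \le 1/\sqrt{\delta}+1$) yields $\Gamma_x(K_\ell) \le 2\sqrt{\delta} + 2\delta$ for every $x \in J_\ell \setminus X$. Integrating over $x \in J_\ell$ and using $\Gamma_x([0,1]) = 1$ on the exceptional set gives $\Gamma(J_\ell \times K_\ell) \le (2\sqrt{\delta} + 2\delta)\lambda(J_\ell) + \lambda(J_\ell \cap X)$.

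Summing over $\ell \in [h]$ and combining with the trivial bound on $[n]\setminus T$,
\[
|B| \;\le\; \delta n + n\bigl(2\sqrt{\delta} + 2\delta + \lambda(X) + h\zeta\bigr) \;\le\; n\bigl(2\sqrt{\delta} + 4\delta + h\zeta\bigr) \;\le\; 3\sqrt{\delta}\,n,
\]
where the last step uses $\lambda(X) \le \delta$, smallness of $\delta$, and $h\zeta \ll \sqrt{\delta}$ from the hierarchy $\zeta \ll 1/h \ll \delta$. The main subtlety will be calibrating the length of the $L_p$'s: they must be short enough that $L_p^{\uparrow \delta}$ still fits inside the badness window $[y_i - 2\sqrt{\delta}, y_i + 2\sqrt{\delta}]$, but not so short that summing $|P_\ell|$ copies of the fibrewise bound $2\delta$ overwhelms the budget. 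The choice $|L_p| = \sqrt{\delta}$ sits at the sweet spot where $|P_\ell|\cdot 2\delta \asymp \sqrt{\delta}$, and the remaining bookkeeping is routine.
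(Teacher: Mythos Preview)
Your argument is correct and follows essentially the same route as the paper's proof. Both proofs cover the bad $y$-coordinates inside each strip $J_\ell$ by $O(1/\sqrt{\delta})$ intervals of length $\sim\sqrt{\delta}$, use~\eqref{eq:distint} to show that each such interval carries fiber-mass at most $2\delta$ throughout $J_\ell\setminus X$, integrate to bound $\Gamma$ on the union, and finally compare $\Gamma$ to $\Delta$ via~\eqref{eq:cutdistlepsi}. The only cosmetic difference is that the paper covers the bad points by rectangles $J(x_i)\times[y_i-\sqrt{\delta},y_i+\sqrt{\delta}]$ and extracts a minimal subcover (yielding $\le 1/\sqrt{\delta}$ intervals per strip), whereas you use a fixed $\sqrt{\delta}$-grid $L_1,\dots,L_m$; and the paper applies the rectangular-distance bound once to the global union of $\le h/\sqrt{\delta}$ rectangles, whereas you apply it strip by strip to $J_\ell\times K_\ell$. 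Neither variation affects the logic or the final arithmetic.
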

   
\begin{proof}[Proof of Claim] 
    We call a point $(x_i,y_i)$ {\it bad} if $\Gamma_{x_i}(y_i - 2\sqrt{\delta} , y_i + 2\sqrt{\delta}) \leq \delta$ and let $Y$ be the set of such bad points. We denote by $J(x)$ the interval $J_j$ (where $j\in[h]$) such that $x\in J_j$. Consider a system of rectangles $\mathcal{C}=\{J(x_i)\times [y_i - \sqrt{\delta} , y_i + \sqrt{\delta}] \mid (x_i,y_i) \text{ bad} \}$ covering $Y$ in $[0,1]^2$. 
    Choose a minimal system $\mathcal{M}\subseteq\mathcal{C}$ which covers $\bigcup\mathcal{C}$. For $j\in[h]$, let $\mathcal{M}_j\subset \mathcal{M}$ be the collection of sets of the form $J_j\times U$. By the minimality, we have 
    \begin{equation}\label{eq:Msmall}
    |\mathcal{M}_j|\leq 2/(2\sqrt{\delta})=1/\sqrt{\delta}\;.
   \end{equation}
 Therefore, $\bigcup\mathcal{C}$ can be written as the disjoint union of at most $h/\sqrt{\delta}$ many rectangles. By using~\eqref{eq:cutdistlepsi} we get
    \begin{equation}\label{eq:milujubuchtu}
        \left|\Gamma(\cup\mathcal{C})-\Delta(\cup\mathcal{C})\right|<\zeta h/\sqrt{\delta}\;.
    \end{equation}
    
For every rectangle $J(x_i)\times [y_i - \sqrt{\delta} , y_i + \sqrt{\delta}]=J_j\times B=M\in\mathcal{M}_j$ we have that
    \begin{align*}
        \Gamma(M \setminus (X \times [0,1])) 
        &= \int_{J_j\setminus X}\Gamma_x(B)\D\lambda(x)
        \overset{\eqref{eq:distint}}{\leq} \lambda(J_j)\cdot(\Gamma_{x_i}(B^{\uparrow \delta})+\delta)\;.
    \end{align*}
In particular, since $(x_i,y_i)$ is bad we get
\begin{equation}
 \label{eq:milujubuchty}
\Gamma(M \setminus (X \times [0,1])) \leq 2\delta\lambda(J_j)\;.
\end{equation}
    Thus, the triangle inequality implies that
    \begin{align*}
        \Delta(\cup\mathcal{C}) &\leq \Gamma(\cup\mathcal{C}\setminus(X\times[0,1]))+ \Gamma(X\times[0,1]) + |\Gamma(\cup\mathcal{C}) - \Delta(\cup\mathcal{C})|\\
        &\overset{\eqref{eq:milujubuchtu}}{\leq} \sum_{j\in[h]}\sum_{M\in\mathcal{M}_j}\Gamma(M\setminus(X\times[0,1]))+\delta+\zeta h/\sqrt{\delta}\\
        &\overset{\eqref{eq:Msmall},\eqref{eq:milujubuchty}}{\leq} \sum_{j\in[h]} \tfrac{1}{\sqrt{\delta}}\cdot 2\delta\lambda(J_j)+\sqrt{\delta}\\
        &\leq 3\sqrt{\delta}\;,
    \end{align*}
    and therefore $|Y|\le\Delta(\cup\mathcal{C})n\leq 3\sqrt{\delta}n$.
\end{proof}

Now set $S\coloneqq\{i\in T \mid \Gamma_{x_i}(y_i - 2\sqrt{\delta}, y_i + 2\sqrt{\delta})>\delta\}$. Observe that by Claim~\ref{clm:posmass} and the fact that $|T|\geq(1-\delta)n$ we have $|S|\geq n-3\sqrt{\delta}n-\delta n \geq (1-4\sqrt{\delta})n$. For every $i\in S$, since $\Gamma_{x_i}$ is $\le\left(k-1\right)$-molecule there exists $\tilde{y}_i\in [y_i - 2\sqrt{\delta}, y_i + 2\sqrt{\delta}]$ such that $\Gamma_{x_i}(\tilde{y}_i)>0$. For every $i\in[n]\setminus S$ choose an arbitrary atom $\tilde{y}_i$ of $\Gamma_{x_i}$ which is possible, as $x_i\notin X_2$ for every $i\in[n]$. By Claim~\ref{clm:sureprop}\ref{clm:atomsdis} we have that the points $(x_i,\tilde{y}_i)_{i\in[n]}$ induce a permutation which we denote by $\tilde{\pi}_n$. Furthermore, by Claim~\ref{clm:sureprop}\ref{clm:Afree} $\tilde{\pi}_n$ is $A$-free. It remains to establish the key property~\eqref{milujuvzorecky}. We split the summands $\sum_{i = 1}^{n}|\pi_n(i)-\tilde{\pi}_n(i)|$ according to whether $i\in S$ or $i\notin S$. In the latter case, it is enough to use the trivial bound $|\pi_n(i)-\tilde{\pi}_n(i)|\le n$. So, we need to have a good bound for $|\pi_n(i)-\tilde{\pi}_n(i)|$ when $i\in S$. Recall~\eqref{eq:whatisy} which tells us that the spacing in the y-coordinates of the geometric representation of $\pi_n$ was exactly $\frac1n$. Thus, if we change each point $(y_j)_{j\in S}$ by at most $2\sqrt{\delta}$, the point $y_i$ changes its relative position\footnote{that is `bigger than/smaller than'} with at most $4\sqrt{\delta}n$ other points $(y_j)_{j\in S}$. Taking into account also elements which are not in $S$, we get $|\pi_n(i)-\tilde{\pi}_n(i)|\le 4\sqrt{\delta}n+|[n]\setminus S|$. Putting all this together,
\[
\sum_{i = 1}^{n}|\pi_n(i)-\tilde{\pi}_n(i)|\leq \sum_{i\in S}(4\sqrt{\delta}n+|[n]\setminus S|)+\sum_{i\in[n]\setminus S} n\leq (4\sqrt{\delta}+4\sqrt{\delta})n^2+4\sqrt{\delta}n^2<\eps n^2\;.
\]

\end{proof}

\section{Discussion and further questions}\label{sec:discussion}

\subsection{Nondifferentiable pattern-avoiding permuton}\label{ssec:nondifferentiable}
Theorem~\ref{thm:molecule} together with Lusin's theorem tells us that we can think of the support of a pattern-avoiding permuton as a union of graphs of partial functions that are continuous on a set whose complement is of arbitrary small positive measure. The following example shows that continuity cannot be strengthened to differentiability. More precisely, there exists a function $f: [0,1] \rightarrow [0,1]$ whose restriction to any subset of $[0,1]$ of positive measure is not differentiable and the permuton supported on the graph of this function is avoiding the pattern $(3142)$.

Consider the quaternary expansion (with digits $0,1,2$ and $3$) of the numbers in $[0,1]$. This is well-defined for all, but countably many numbers, the rational numbers whose denominator is a power of two. We will ignore this countable set.
Let $f$ be the function that swaps $1$ and $2$ for every coordinate. This function is injective (up to a nullset) and measure preserving, hence its graph is the support of a permuton.

First we argue that the graph of $f$ is free of $(3142)$.
Suppose for a contradiction that there are four numbers $x_1<x_2<x_3<x_4$ such that $f(x_2) < f(x_4) < f(x_1) < f(x_3)$. Consider the first digit where $x_1$ and $x_2$ differ. Clearly this digit should be $1$ for $x_1$ and $2$ for $x_2$. The number $x_4$ cannot differ in an earlier digit, since $f(x_2) < f(x_4) < f(x_1)$. Hence $x_3$ being less than $x_4$ can also not differ in an earlier digit. We will get a contradiction when checking the critical digit: $f(x_3)>f(x_1), x_3>x_2$ implies that this digit of $x_3$ should be $3$, but then $x_4$ also has digit $3$ here contradicting 
$f(x_1)>f(x_4)$.

It remains to show that the restriction of $f$ to any set of positive measure cannot be differentiable: if there was such a set then on a subset of positive measure for every pair the inequality $|\frac{f(x)-f(y)}{x-y}-f'(x)|<\frac{1}{2}$ would hold.
However, given a natural number $n$, $i \in \{-3, -2, -1, 1,2,3\}$ and real numbers $x, x+i4^{-n}$ agreeing until the first $(n-1)$ digits, note that $\frac{f(x)-f(x+i4^{-n})}{i4^{-n}} \in \{ -2, -1, -\frac{1}{2}, \frac{1}{2}, 1, 2 \}$, and this value depends on $i$ and the $n$th digit of $x$ (but not on $n$), and for every $x$ and $n$ we get three different values (depending on $i$). By the Lebesgue density theorem any set of positive measure contains more than three quarter of a small interval, so we can find four numbers in the set agreeing in all, but one digit. Hence the restriction of $f$ to any set of positive measure cannot be differentiable.

\subsection{Lower bounds on the Stanley--Wilf constant}
For a permutation $A$, let $\mathbb{S}(A;n)\subset \mathbb{S}(n)$ be the set of $A$-avoiding permutations of order $n$. The Stanley--Wilf constant is defined as $c_A:=\lim_n \sqrt[n]{|\mathbb{S}(A;n)|}$. The exact value of the Stanley--Wilf constant is only known for some permutations and permutation classes (see~\cite[Chapter~4]{MR2919720}): trivially for $A\in\mathbb{S}(2)$ we have $c_A=1$, for all $A\in\mathbb{S}(3)$ we have $c_A=4$, for each $k\in\mathbb{N}$ we have $c_{\mathsf{identity}_k}=(k-1)^2$, and then the Stanley--Wilf constant is known for several sporadic examples, including $c_{(1342)}=8$, $c_{(12453)}=9+4\sqrt{2}$.

Consider an $A$-free permuton $\Gamma$. We can use $\Gamma$ to generate many different $A$-free permutations of order $n$. Let us illustrate this with the $(1234)$-avoiding permuton $\Gamma$ from Figure~\ref{fig:Example}. Fix the numbers $x_1<\ldots<x_n$ to be equidistant in $[0,1]$. Each fiber $\Gamma_{x_i}$ has exactly two atoms. So, we choose $y_i$ to be one of these two atoms. We have $2^n$ options in total of generating a geometric configuration. It can be shown that most of these choices represent different permutations, hence we obtain $c_{(1234)}\ge 2$.

\subsection{Permuton limits of typical pattern-avoiding permutations}
Consider the sequence of independent random permutations $\pi_1,\pi_2,\ldots$, where $\pi_i$ is taken from $\mathbb{S}(i)$ uniformly at random. It is a well-known fact that such a sequence converges in the rectangular distance almost surely to the Lebesgue measure on $[0,1]^2$. What if we condition $\pi_i$ to be free of a given pattern $A$?
\begin{qu}
\label{pr:typicalfreepermuton}
Consider a pattern $A\in\mathbb{S}(k)$. Suppose that $\pi_1,\pi_2,\ldots$ is a sequence of independent random permutations, where $\pi_i$ is taken from $\mathbb{S}(A;i)$ uniformly at random. Does this sequence converge in the rectangular distance almost surely? What is the limit permuton $\Delta_A$?
\end{qu}
Theorem~\ref{thm:molecule} asserts that the hypothetical permuton $\Delta_A$ is supported on the union of the graphs of finitely many functions.

Joshua Cooper~\cite{Cooper:question} asked for two patterns $A$ and $B$ the expected density of $B$ in a permutation chosen uniformly at random from $\mathbb{S}(A;n)$. If the answer to Question~\ref{pr:typicalfreepermuton} is positive then it is easy to show that this expected density converges to $t(B,\Delta_A)$.

In a recent work Borga, Das, Mukherjee and Winkler~\cite{BDMW22} have introduced a Gibbs permutation model that might help to approach our question. This model does not include the uniform distribution on permutations avoiding a fixed pattern $A$, but it allows to give an exponentially small probability w.r.t. the density of $A$ to a permutation.

Question~\ref{pr:typicalfreepermuton} is trivial for $A\in\mathbb{S}(2)$. For $A=(132)$, Section~5 of \cite{MR2676667} tells us that for a permutation $\pi\in\mathbb{S}(A;n)$ we have $t(12,\pi)=o(1)$ asymptotically almost surely. This means that $\Delta_{(132)}$ is the antidiagonal permuton. Known bijections between $\mathbb{S}(123;n)$, $\mathbb{S}(132;n)$, $\mathbb{S}(213;n)$, $\mathbb{S}(231;n)$, $\mathbb{S}(312;n)$, and $\mathbb{S}(321;n)$ tell us that $\Delta_{(123)}=\Delta_{(213)}$ is also the antidiagonal permuton, whereas $\Delta_{(231)}=\Delta_{(312)}=\Delta_{(321)}$ is the diagonal permuton. For monotone increasing patterns it follows from a more general large deviations result~\cite{MP16} that $\Delta_{\mathsf{identity}_k}$ is the antidiagonal permuton. We do not know of other limit permutons of typical $A$-free permutations. 

All the above examples lead to the diagonal or the antidiagonal limit permuton. Therefore, it is of particular interest to find limit permutons of a different shape. For the permutation $\sigma=(3142)$, suggested to us by Mikl\'os B\'ona, $\Delta_{\sigma}$ can neither be the diagonal nor the antidiagonal (if it exists). Indeed, suppose that $\Delta_{\sigma}$ is the diagonal. Then $\Delta_{\sigma^{-1}}$ must also be the diagonal. On the other hand, we can define the complement of a permutation $\pi\in \mathbb{S}(n)$ via $C(\pi)(i)=n+1-\pi(i)$ and observe that $\Delta_{C(\sigma)}$ then must be the antidiagonal contradicting that $C(\sigma)=(2413)=\sigma^{-1}$. One can argue similarly that $\Delta_{\sigma}$ cannot be the antidiagonal.

\section{Acknowledgments}
This work was initiated during the workshop \emph{Interfaces of the Theory of Combinatorial Limits} at the Erd\H{o}s center of the Rényi Institute. We thank the organisers for creating a very productive atmosphere. We thank Mikl\'os B\'ona, Joshua Cooper, Martin Dole\v{z}al, Lara Pudwell and the two anonymous referees for comments and discussions.
\bibliographystyle{plain}
\bibliography{Relocation}

\end{document}